\newtheorem{thm}{Theorem}[section]
\newtheorem{lem}[thm]{Lemma}
\newtheorem{defi}[thm]{Definition}
\newtheorem{prop}[thm]{Proposition}
\newtheorem{rem}[thm]{Remark}
\newtheorem{exam}[thm]{Example}
\begin{document}

\title{A Deligne pairing for Hermitian Azumaya modules}
\author{Fabian Reede\footnote{Georg-August-Universit\"at G\"ottingen, freede@uni-math.gwdg.de}}
\maketitle

\begin{abstract}
In this short note we want to give a definition of a generalized Deligne pairing for modules over an Azumaya algebra on an arithmetic surface $X$. We do this by defining  Hermitian metrics on the Azumaya algebra and on the modules in question. Then we go on and define the determinant of the cohomology for a pair of modules over an Azumaya algebra. Using this we give the definition of a generalized Deligne pairing and study some of its properties. 
\end{abstract}
\section*{Introduction}
Assume we are given an arithmetic surface $\pi: X\rightarrow Y$ and two line bundles $\mathcal{L}$ and $\mathcal{M}$ on $X$. In (\cite{deli}) Deligne associated to this data a line bundle $\left\langle \mathcal{L},\mathcal{M}\right\rangle$ on $Y$. This line bundle is nowadays called the Deligne pairing of the two line bundles. If these line bundles are in fact Hermitian line bundles, then Deligne showed how to construct a Hermitian metric on the pairing, using the two given metrics. That is, the Deligne pairing gives a Hermitian line bundle on $Y$. This Hermitian line bundle has some interesting properties if one looks at it in Arakelov Geometry, see for example (\cite[Theorem 4.10.1.]{soul2}) or (\cite[1.3.]{soul3}).\\
In this note we generalize this pairing to modules over an Azumaya algebra on an arithmetic surface $X$.
\medskip

\noindent In the first section we recall the basic definitions of Azumaya algebras on schemes and of Brauer groups. We go on and recall some facts from Morita theory which we will need. Then we can define Hermitian metrics on Azumaya algebras and their modules.
\medskip

\noindent In the second section we define the determinant of the cohomology of a pair of Azumaya modules. Using this we give the definition of the Deligne pairing for Azumaya modules and prove that for the trivial Azumaya algebra we get the well known Deligne pairing for Hermitian line bundles back. 
\subsection*{Notation}
All Hermitian metrics in this note are supposed to be invariant with respect to the complex conjugation on $\mathbb{C}$, see (\cite[Definition IV.4.1.4.]{soul}).\\
All sheaves in this note are supposed to be coherent if not otherwise stated.\\
In this note we always assume that if $\mathcal{A}$ is an Azumaya algebra on a scheme $X$, then $\mathcal{A}_\eta$ is a division ring over the function field $k(X)=\mathcal{O}_{X,\eta}$, here $\eta \in X$ is the generic point of the scheme $X$.
\tableofcontents

\section{Hermitian Azumaya modules}

\subsection{Azumaya algebras and Azumaya modules}
\begin{defi}
If $X$ is any scheme, then a locally free sheaf of locally finite rank equipped with the structure of a sheaf of $\mathcal{O}_X$-algebras is called an Azumaya algebra over $X$ if the following condition is satisfied:\\ 
For every closed point $x\in X$ the $k(x)$-algebra $\mathcal{A}(x):=\mathcal{A}\otimes_{\mathcal{O}_X} k(x)$ is a central simple algebra over the residue field $k(x)$.
\end{defi}
\begin{prop}\label{azcrit}
If $X$ is any scheme, then a locally free sheaf of locally finite rank equipped with the structure of a sheaf of $\mathcal{O}_X$-algebras is an Azumaya algebra over $X$ if and only if the canonical homomorphism
\begin{alignat*}{3}
\mathcal{A}&\otimes_{\mathcal{O}_X}\mathcal{A}^{op} &&\longrightarrow{}{} &&\mathcal{E}nd_{\mathcal{O}_X}(\mathcal{A})\\
a&\otimes a' &&\longmapsto &&(x\mapsto axa')
\end{alignat*} 
is an isomorphism.
\end{prop}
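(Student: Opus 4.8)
The plan is to reduce the assertion over $X$ to a fibrewise statement over the residue fields $k(x)$, where it becomes the classical characterization of central simple algebras. Write $\phi\colon \mathcal{A}\otimes_{\mathcal{O}_X}\mathcal{A}^{op}\to\mathcal{E}nd_{\mathcal{O}_X}(\mathcal{A})$ for the canonical homomorphism. Since $\mathcal{A}$ is locally free of finite rank, say of rank $n$ near a given point, both the source and the target of $\phi$ are locally free $\mathcal{O}_X$-modules of the same rank $n^2$ there. Hence $\phi$ is a morphism of locally free sheaves of equal finite rank, and deciding whether it is an isomorphism is a local question on $X$.

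The key tool is the fibrewise criterion: a morphism $\phi$ of locally free sheaves of the same finite rank is an isomorphism if and only if the induced map on fibres $\phi(x)$ is an isomorphism for every $x\in X$. Concretely, locally $\det\phi$ is a section of $\mathcal{O}_X$, and $\phi$ is an isomorphism precisely when this section is nonvanishing at every point, which by Nakayama's lemma can be tested on fibres. For a quasi-compact scheme $X$ --- in particular for the arithmetic surfaces considered here --- the locus where $\phi$ fails to be an isomorphism is closed, and every nonempty closed subset contains a closed point of $X$; so it suffices to test the fibre condition at closed points. Using that $\mathcal{A}$ is locally free, the base-change identifications $(\mathcal{A}\otimes_{\mathcal{O}_X}\mathcal{A}^{op})(x)\cong \mathcal{A}(x)\otimes_{k(x)}\mathcal{A}(x)^{op}$ and $\mathcal{E}nd_{\mathcal{O}_X}(\mathcal{A})(x)\cong \operatorname{End}_{k(x)}(\mathcal{A}(x))$ show that $\phi(x)$ is precisely the canonical map $a\otimes a'\mapsto(y\mapsto aya')$ of the finite-dimensional $k(x)$-algebra $\mathcal{A}(x)$.

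This reduces the proposition to the classical fact over a field: a finite-dimensional $k$-algebra $A$ is central simple over $k$ if and only if the canonical map $A\otimes_k A^{op}\to\operatorname{End}_k(A)$ is an isomorphism. For the forward direction one uses that $A\otimes_k A^{op}$ is simple when $A$ is central simple, that the canonical map is a nonzero algebra homomorphism and hence injective, and that $\dim_k(A\otimes_k A^{op})=n^2=\dim_k\operatorname{End}_k(A)$ forces bijectivity; the converse follows because $\operatorname{End}_k(A)\cong M_n(k)$ is central simple, and one checks that a nontrivial two-sided ideal or a nontrivial center of $A$ would produce one in $A\otimes_k A^{op}$, so this property forces $A$ to be central simple as well. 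Combining the two steps, $\phi$ is an isomorphism if and only if $\mathcal{A}(x)$ is central simple over $k(x)$ for every closed point $x$, which is the defining condition.

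The step I expect to be the main obstacle is the globalization in the forward direction (Azumaya $\Rightarrow$ $\phi$ an isomorphism): the hypothesis is only imposed at closed points, whereas the fibrewise criterion a priori concerns all points of $X$. The clean remedy is the quasi-compactness argument above, showing that the closed non-isomorphism locus, if nonempty, must already contain a closed point; equivalently one argues on the stalks $\mathcal{O}_{X,x}$, where an isomorphism at the closed fibre propagates to an isomorphism of the stalk by Nakayama. The remaining ingredients --- the rank computation, the compatibility of the canonical map with base change to fibres, and the field-theoretic characterization of central simple algebras --- are standard and routine.
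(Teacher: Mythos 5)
The paper gives no proof of this proposition at all: it is stated as the standard (Grothendieck) characterization of Azumaya algebras, so there is no argument in the paper to compare yours against. Judged on its own, your proof is the standard one and is essentially correct: the reduction to fibres via the local determinant of a map between locally free sheaves of equal rank, the base-change identifications $(\mathcal{A}\otimes_{\mathcal{O}_X}\mathcal{A}^{op})(x)\cong\mathcal{A}(x)\otimes_{k(x)}\mathcal{A}(x)^{op}$ and $\mathcal{E}nd_{\mathcal{O}_X}(\mathcal{A})(x)\cong \operatorname{End}_{k(x)}(\mathcal{A}(x))$, and the classical equivalence for finite-dimensional algebras over a field are all sound. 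The point you flagged as the main obstacle deserves emphasis, because it is not merely an artifact of your method: since the paper's definition imposes the central-simple condition only at \emph{closed} points, some hypothesis guaranteeing that nonempty closed subsets contain closed points of $X$ is genuinely needed. Your quasi-compactness argument supplies exactly this (a nonempty closed subset of a quasi-compact scheme is itself quasi-compact, hence contains a point closed in it, which is then closed in $X$). For a completely arbitrary scheme --- which is what the proposition literally asserts --- the equivalence can actually fail with the paper's definition: there exist nonempty schemes with no closed points whatsoever, and on such a scheme a locally free algebra like $\mathcal{O}_X\times\mathcal{O}_X$ satisfies the closed-point condition vacuously while the canonical map is nowhere an isomorphism. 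So your restriction to quasi-compact $X$ (which covers the arithmetic surfaces the paper cares about, and could equally be replaced by a Jacobson hypothesis) should be read as a necessary correction to the statement, not as a gap in your proof.
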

\begin{rem}
\normalfont Here $\mathcal{A}^{op}$ denotes the opposite ring. It has the same underlying abelian group as $\mathcal{A}$ but the multiplication is reversed.
\end{rem}
We will also need the following lemma which describes the behaviour of Azumaya algebras with respect to pullbacks.
\begin{lem}\label{pullaz}
Let $f:X \rightarrow Y$ be a morphism of schemes. If $\mathcal{A}$ is an Azumaya algebra over $Y$ then $f^{*}\mathcal{A}$ is an Azumaya algebra over $X$. 
\end{lem}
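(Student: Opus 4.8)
The plan is to reduce everything to the criterion of Proposition \ref{azcrit}, which replaces the defining fibrewise condition by the single requirement that one canonical morphism be an isomorphism — a formulation that behaves well under $f^*$. (One could instead try to argue fibrewise, using that the base change of a central simple algebra along a field extension is again central simple; but the definition only imposes the condition at \emph{closed} points, and the image $f(x)$ of a closed point need not be closed, so this route needs extra care. The criterion avoids that issue, so I would use it.) First I would record the routine structural facts: the pullback $f^*\mathcal{A}$ is again locally free of locally finite rank, since $f^*$ preserves local freeness and the rank; it carries the structure of a sheaf of $\mathcal{O}_X$-algebras because $f^*$ is a strong symmetric monoidal functor on quasi-coherent sheaves and hence sends algebra objects to algebra objects; and for the same reason $f^*$ is compatible with passing to the opposite algebra, giving $(f^*\mathcal{A})^{op}\cong f^*(\mathcal{A}^{op})$ as $\mathcal{O}_X$-algebras.

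The heart of the argument is to compare the two canonical morphisms. On $Y$ the canonical map $\mathcal{A}\otimes_{\mathcal{O}_Y}\mathcal{A}^{op}\to\mathcal{E}nd_{\mathcal{O}_Y}(\mathcal{A})$ is an isomorphism by Proposition \ref{azcrit}. Applying the functor $f^*$, and using that $f^*$ commutes with tensor products, yields an isomorphism $f^*\mathcal{A}\otimes_{\mathcal{O}_X}f^*(\mathcal{A}^{op})\to f^*\mathcal{E}nd_{\mathcal{O}_Y}(\mathcal{A})$; note that no exactness is needed here, since any functor preserves isomorphisms. It then remains to identify source and target with the corresponding objects over $X$. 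For the source this is the monoidal compatibility already noted. For the target I would use that $\mathcal{A}$ is locally free of finite rank, so that $\mathcal{E}nd_{\mathcal{O}_Y}(\mathcal{A})\cong\mathcal{A}^\vee\otimes_{\mathcal{O}_Y}\mathcal{A}$, and then invoke the fact that $f^*$ commutes with both the dual and the tensor product of locally free sheaves of finite rank, producing the natural isomorphism $f^*\mathcal{E}nd_{\mathcal{O}_Y}(\mathcal{A})\cong\mathcal{E}nd_{\mathcal{O}_X}(f^*\mathcal{A})$.

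Finally I would check that, under these identifications, the pulled-back morphism is precisely the canonical map $f^*\mathcal{A}\otimes_{\mathcal{O}_X}(f^*\mathcal{A})^{op}\to\mathcal{E}nd_{\mathcal{O}_X}(f^*\mathcal{A})$, $a\otimes a'\mapsto(x\mapsto axa')$, of Proposition \ref{azcrit}; since all the isomorphisms used are the natural ones and the multiplication map is preserved by $f^*$, this is a short diagram chase. Being the pullback of an isomorphism, the morphism over $X$ is then itself an isomorphism, and Proposition \ref{azcrit} shows that $f^*\mathcal{A}$ is an Azumaya algebra. I expect the main obstacle to be exactly the compatibility of $f^*$ with the $\mathcal{E}nd$-sheaf: this genuinely relies on the finite rank hypothesis (it can fail without it), and one must verify that the various canonical isomorphisms assemble into a commutative square, so that the pulled-back map is \emph{identified} with the canonical multiplication map rather than merely being abstractly isomorphic to some isomorphism.
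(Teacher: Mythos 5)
Your proposal is correct. Note that the paper itself states Lemma \ref{pullaz} without any proof --- it is recalled as a known fact --- so there is no in-paper argument to compare against; what you give is the standard proof, and it is the natural one given how the paper sets things up, since the criterion of Proposition \ref{azcrit} is phrased purely in terms of a canonical morphism whose formation commutes with pullback. All the essential points are in place: $f^*$ preserves local freeness, algebra structures, and opposite algebras; it sends the canonical map over $Y$ to the canonical map over $X$ by naturality of the identifications; and the identification $f^*\mathcal{E}nd_{\mathcal{O}_Y}(\mathcal{A})\cong\mathcal{E}nd_{\mathcal{O}_X}(f^*\mathcal{A})$ is valid precisely because $\mathcal{A}$ is locally free of finite rank, which you correctly flag as the one step that genuinely needs the hypothesis. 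Your side remark about why a direct fibrewise argument is delicate (the image of a closed point of $X$ need not be a closed point of $Y$, and the paper's definition only constrains closed points) is also accurate and is exactly the reason to route the proof through Proposition \ref{azcrit}.
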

\begin{defi}
Let $X$ be any scheme. Two Azumaya algebras $\mathcal{A}$ and $\mathcal{A}'$ are similar, $\mathcal{A}\sim\mathcal{A}'$, if there exist two locally free $\mathcal{O}_X$-modules $\mathcal{E}$ and $\mathcal{F}$ such that there is an isomorphism
\begin{center}
$\mathcal{A}\otimes_{\mathcal{O}_X}\mathcal{E}nd_{\mathcal{O}_X}(\mathcal{E}) \cong \mathcal{A}'\otimes_{\mathcal{O}_X}\mathcal{E}nd_{\mathcal{O}_X}(\mathcal{F})$.
\end{center}
\end{defi}
It is an easy exercise to see that similarity is in fact an equivalence relation. Using this fact we can make the following definition:
\begin{defi}
Let $X$ be any scheme. The set of all similarity classes of Azumaya algebras over $X$ is called the Brauer group of $X$. It is denoted by $Br(X)$.
\end{defi}
\begin{rem}
\normalfont The set $Br(X)$ is in fact a group:\\ 
If $[\mathcal{A}]$ denotes the similarity class of $\mathcal{A}$, then $Br(X)$ turns into a group using the multiplication 
\begin{center}
$[\mathcal{A}]\bullet[\mathcal{A}']:=[\mathcal{A}\otimes_{\mathcal{O}_X}\mathcal{A}']$. 
\end{center}
The identity element is given by $[\mathcal{O}_X]$ and the inverse of $[\mathcal{A}]$ is given by $[\mathcal{A}^{op}]$ because of (\ref{azcrit}). 
\end{rem}
\begin{rem}
\normalfont If $X=Spec(K)$ for a field $K$, then $Br(X)$ equals the usual Brauer group $Br(K)$ of Azumaya algebras over $K$. Especially we have $Br(\mathbb{C})=0$, $Br(\mathbb{F}_q)=0$ and $Br(\mathbb{R})=\mathbb{Z}/2\mathbb{Z}$. 
\end{rem}
\begin{exam}\label{brauer}
\normalfont Let $X$ be a smooth proper curve over an algebraically closed field $k$. Then one has $Br(k(X))=0$. This fact is known as Tsen's theorem, (\cite[Th\'{e}or\`{e}me (1.1)]{groth1}). As a consequence we get $Br(X)=0$, since we have an injection $Br(X)\hookrightarrow Br(k(X))$ in this case, see for example (\cite[Corollaire (1.2)]{groth1}).
\end{exam}
We will also need the notion of modules over an Azumaya algebra.
\begin{defi}
Let $X$ be any scheme and let $\mathcal{A}$ be an Azumaya algebra over $X$. A sheaf of $\mathcal{O}_X$-modules $\mathcal{M}$ is called an Azumaya module if it has the structure of a locally projective left $\mathcal{A}$-module. An Azumaya module is called an Azumaya line bundle, if $\mathcal{M}_{\eta}$ is a one-dimensional vector space over the division ring $\mathcal{A}_{\eta}$.
\end{defi}
\begin{rem}
\normalfont Since $\mathcal{A}$ is a locally free $\mathcal{O}_X$-module, we see that any Azumaya module is also locally free as an $\mathcal{O}_X$-module.
\end{rem}
\begin{rem}
\normalfont We note that one can also define more general Azumaya modules, for example by allowing torsion free modules instead of locally projective ones. Since we are only interested in locally projective modules in the following we will use this as our definition of Azumaya modules.
\end{rem}

\subsection{Morita equivalence}
If $C$ is a smooth proper curve over an algebraically closed field, that is, a smooth projective curve, then we have seen in $(\ref{brauer})$ that $Br(C)=0$. So if $\mathcal{A}$ is an Azumaya algebra over $C$, then there exists a locally free $\mathcal{O}_C$-module $\mathcal{E}$ and an isomorphism 
\begin{center}
$\mathcal{A}\cong \mathcal{E}nd_{\mathcal{O}_C}(\mathcal{E})$.
\end{center}
In general if an Azumaya algebra $\mathcal{A}$ on a scheme $X$ is of the form $\mathcal{E}nd_{\mathcal{O}_X}(\mathcal{E})$, for a locally free sheaf $\mathcal{E}$ on $X$, then we can use Morita equivalence to describe the category of $\mathcal{A}$-modules.
\medskip

\noindent We will now recall a few facts from Morita theory, which we will need in the following sections. To begin we note that $\mathcal{E}$ is naturally a left $\mathcal{A}$-module and the module structure is given by evaluation. Furthermore $\mathcal{E}^{*}:=\mathcal{H}om_{\mathcal{O}_X}(\mathcal{E},\mathcal{O}_X)$ is naturally a right $\mathcal{A}$-module and the module structure is given by composition.
\begin{lem}
Let $X$ be a scheme. If $\mathcal{A}\cong \mathcal{E}nd_{\mathcal{O}_X}(\mathcal{E})$ is an Azumaya algebra over $X$, then the evaluation map 
\begin{center}
$\mathcal{E}^{*}\otimes_{\mathcal{A}}\mathcal{E}\rightarrow \mathcal{O}_X$ 
\end{center}
is an isomorphism of $\mathcal{O}_X$-modules.
\end{lem}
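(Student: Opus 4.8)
The plan is to prove the statement locally on $X$. Both $\mathcal{E}^{*}\otimes_{\mathcal{A}}\mathcal{E}$ and $\mathcal{O}_X$ are $\mathcal{O}_X$-modules and the evaluation map is $\mathcal{O}_X$-linear, so it suffices to check that it is an isomorphism on stalks. Fix $x\in X$ and set $R:=\mathcal{O}_{X,x}$. Since the tensor product of sheaves of $\mathcal{A}$-modules commutes with passing to stalks, the induced map on stalks is
\[
\mathcal{E}^{*}_x\otimes_{\mathcal{A}_x}\mathcal{E}_x\longrightarrow R,
\]
where $\mathcal{E}_x$ is free of finite rank $n$ over $R$ because $\mathcal{E}$ is locally free, and $\mathcal{A}_x=\operatorname{End}_R(\mathcal{E}_x)\cong M_n(R)$ by hypothesis.

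Next I would set up the matrix model. Choosing an $R$-basis of $\mathcal{E}_x$ identifies $\mathcal{E}_x$ with the column vectors $R^{n\times 1}$ (a left $M_n(R)$-module), $\mathcal{E}_x^{*}$ with the row vectors $R^{1\times n}$ (a right $M_n(R)$-module), and the evaluation map with the matrix product $v\otimes w\mapsto vw\in R$. Writing $e_i,e_i^{*}$ for the standard bases and $E_{ij}$ for the matrix units, the balancing relation $\phi\cdot M\otimes e=\phi\otimes M\cdot e$ over $\mathcal{A}_x$ yields
\[
e_i^{*}\otimes e_j = e_1^{*}E_{1i}\otimes e_j = e_1^{*}\otimes E_{1i}e_j = \delta_{ij}\,(e_1^{*}\otimes e_1).
\]
Hence every simple tensor, and therefore every element, of $\mathcal{E}^{*}_x\otimes_{\mathcal{A}_x}\mathcal{E}_x$ is an $R$-multiple of the single element $e_1^{*}\otimes e_1$; equivalently, the $R$-linear map $s\colon R\to\mathcal{E}^{*}_x\otimes_{\mathcal{A}_x}\mathcal{E}_x$, $r\mapsto r(e_1^{*}\otimes e_1)$, is surjective.

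To conclude I would compare $s$ with the evaluation map $\operatorname{ev}$. Since $\operatorname{ev}(e_1^{*}\otimes e_1)=e_1^{*}(e_1)=1$, the composite $\operatorname{ev}\circ s$ is the identity of $R$; thus $s$ is injective as well, hence an isomorphism, and consequently $\operatorname{ev}=s^{-1}$ is an isomorphism of $R$-modules. As this holds at every $x\in X$, the evaluation map is an isomorphism of $\mathcal{O}_X$-modules.

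The main obstacle is injectivity: a priori $\mathcal{E}^{*}_x\otimes_{\mathcal{A}_x}\mathcal{E}_x$ could be strictly larger than $R$, and ruling this out is exactly what the identity $e_i^{*}\otimes e_j=\delta_{ij}(e_1^{*}\otimes e_1)$ accomplishes, by collapsing every simple tensor onto multiples of a single generator. The only delicate point is the bookkeeping of the left and right module structures, so that the balancing over the noncommutative algebra $\mathcal{A}_x$ is applied in the correct direction; surjectivity of $\operatorname{ev}$, in contrast, is immediate from $\operatorname{ev}(e_1^{*}\otimes e_1)=1$.
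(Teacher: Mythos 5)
Your proof is correct and follows essentially the same route as the paper: reduce to stalks, fix a basis, and use the balancing relation over $\operatorname{End}_R(\mathcal{E}_x)$ to collapse every tensor $e_i^{*}\otimes e_j$ to $\delta_{ij}\,(e_1^{*}\otimes e_1)$, so that the tensor product is generated by a single element mapping to $1$ under evaluation. Your matrix-unit calculation is exactly the ``short computation'' the paper leaves implicit, and your explicit section $s$ with $\operatorname{ev}\circ s=\operatorname{id}_R$ makes the paper's terse final step (``but this map is an isomorphism'') precise.
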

\begin{proof}
Since the evaluation map defines a sheaf map $\mathcal{E}^{*}\otimes_{\mathcal{A}}\mathcal{E}\rightarrow \mathcal{O}_X$ is is enough to check that this is an isomorphism on the stalks.\\ 
So we pick a point $x\in X$. We get a commutative ring $R=\mathcal{O}_{X,x}$ and a free $R$-module $M=\mathcal{E}_x$. We want to prove that the evaluation map 
\begin{center}
$M^{*}\otimes_{End_R(M)}M\rightarrow R$
\end{center}
is an isomorphism. To do this we pick a basis $m_1,\ldots,m_r$ of $M$, which induces a dual basis $m_1^{*},\ldots,m_r^{*}$ of $M^{*}$.\\ 
It is a short computation to see that we have:
\begin{itemize}
\item $m_i^{*}\otimes m_i=m_j^{*}\otimes m_j$ for all $i,j$. 
\item $m_i^{*}\otimes m_j=0$ for $i\neq j$ 
\end{itemize}
Now every element $z$ in $M^{*}\otimes_{End_R(M)}M$ is a sum of pure tensors, that is, we can write $z=\sum a_{ij} m_i^{*}\otimes m_j$. The results show that z simplifies to $z=(\sum a_{ii})m_1^{*}\otimes m_1$. So $z$ maps to $\sum a_{ii}$ under the evaluation map. But this map is an isomorphism.  
\end{proof}
\begin{lem}
Let $X$ be a scheme. If $\mathcal{A}\cong \mathcal{E}nd_{\mathcal{O}_X}(\mathcal{E})$ is an Azumaya algebra over $X$, then for any two $\mathcal{O}_X$-modules $\mathcal{M}$ and $\mathcal{N}$ there is an isomorphism
\begin{center}
$\mathcal{H}om_{\mathcal{O}_X}(\mathcal{M},\mathcal{N}) \cong \mathcal{H}om_{\mathcal{A}}(\mathcal{E}\otimes_{\mathcal{O}_X}\mathcal{M},\mathcal{E}\otimes_{\mathcal{O}_X}\mathcal{N})$. 
\end{center}
\end{lem}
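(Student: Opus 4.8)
The plan is to recognize the map in question as coming from the Morita equivalence attached to $\mathcal{A}\cong\mathcal{E}nd_{\mathcal{O}_X}(\mathcal{E})$, and to read the $\mathcal{H}om$-isomorphism off the full faithfulness of one of the equivalence functors. The functor of interest is $F:=\mathcal{E}\otimes_{\mathcal{O}_X}(-)$, which sends an $\mathcal{O}_X$-module to a left $\mathcal{A}$-module, $\mathcal{A}$ acting only on the factor $\mathcal{E}$. There is an evident natural candidate realizing the asserted isomorphism, namely the sheaf map
\[
\mathcal{H}om_{\mathcal{O}_X}(\mathcal{M},\mathcal{N})\longrightarrow \mathcal{H}om_{\mathcal{A}}(\mathcal{E}\otimes_{\mathcal{O}_X}\mathcal{M},\mathcal{E}\otimes_{\mathcal{O}_X}\mathcal{N}),\qquad \phi\longmapsto \mathrm{id}_{\mathcal{E}}\otimes\phi,
\]
which is visibly $\mathcal{A}$-linear precisely because $\mathcal{A}$ acts on the first tensor factor alone; the whole content of the lemma is that this map is bijective.

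To prove bijectivity I would pair $F$ with the functor $G:=\mathcal{E}^{*}\otimes_{\mathcal{A}}(-)$ in the opposite direction and establish the two structural isomorphisms making them quasi-inverse. One of these, $\mathcal{E}^{*}\otimes_{\mathcal{A}}\mathcal{E}\cong\mathcal{O}_X$, is exactly the preceding lemma. The companion isomorphism of $\mathcal{A}$-bimodules $\mathcal{E}\otimes_{\mathcal{O}_X}\mathcal{E}^{*}\cong\mathcal{A}$ is the evaluation map $e\otimes\lambda\mapsto(x\mapsto\lambda(x)\,e)$, which is an isomorphism because $\mathcal{E}$ is locally free of finite rank and $\mathcal{A}=\mathcal{E}nd_{\mathcal{O}_X}(\mathcal{E})$. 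Combining these with the associativity of the tensor product yields natural isomorphisms $GF\cong\mathrm{id}$ and $FG\cong\mathrm{id}$; for instance $GF(\mathcal{M})=\mathcal{E}^{*}\otimes_{\mathcal{A}}(\mathcal{E}\otimes_{\mathcal{O}_X}\mathcal{M})\cong(\mathcal{E}^{*}\otimes_{\mathcal{A}}\mathcal{E})\otimes_{\mathcal{O}_X}\mathcal{M}\cong\mathcal{M}$. Hence $F$ is an equivalence of categories, in particular fully faithful.

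In the spirit of the preceding proof, I would then settle the sheaf statement by checking the displayed natural map on stalks. Since $\mathcal{E}$ is locally free of finite rank and all sheaves are coherent, both $\mathcal{H}om$-sheaves commute with passing to the stalk at a point $x\in X$, so the claim reduces to the assertion that for the local ring $R=\mathcal{O}_{X,x}$, the free module $E=\mathcal{E}_x$, the algebra $A=\mathrm{End}_R(E)$, and $M=\mathcal{M}_x$, $N=\mathcal{N}_x$, the map $\mathrm{Hom}_R(M,N)\to\mathrm{Hom}_A(E\otimes_R M,E\otimes_R N)$, $\phi\mapsto\mathrm{id}_E\otimes\phi$, is bijective. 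This is the classical Morita theorem for $\mathrm{End}_R(E)$, with inverse given concretely by $G$: applying $E^{*}\otimes_A(-)$ to an $A$-linear $\Phi\colon E\otimes_R M\to E\otimes_R N$ and using $E^{*}\otimes_A E\cong R$ returns the desired $R$-linear map.

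I expect the difficulty to be bookkeeping rather than anything deep. The two points requiring care are, first, verifying that the associativity isomorphisms feeding into $GF\cong\mathrm{id}$ and $FG\cong\mathrm{id}$ genuinely respect the relevant left/right $\mathcal{A}$- and $\mathcal{O}_X$-module structures, so that they are morphisms in the correct categories and so that the induced bijection on homomorphisms is exactly $\phi\mapsto\mathrm{id}_{\mathcal{E}}\otimes\phi$ and not merely some abstract identification; and second, the distinction between the sheaf $\mathcal{H}om$ appearing in the statement and the group-level $\mathrm{Hom}$ produced by an abstract equivalence. Reducing to stalks as above, where $\mathcal{H}om$ localizes correctly, is the cleanest way to bridge that last gap.
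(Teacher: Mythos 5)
Your proposal is correct and takes essentially the same route as the paper: the paper's proof consists precisely of the two maps you describe, namely $\mathrm{id}_{\mathcal{E}}\otimes_{\mathcal{O}_X}(\cdot)$ in one direction and $\mathrm{id}_{\mathcal{E}^{*}}\otimes_{\mathcal{A}}(\cdot)$ in the other, combined with the preceding lemma $\mathcal{E}^{*}\otimes_{\mathcal{A}}\mathcal{E}\cong\mathcal{O}_X$. The paper leaves the verification that these are mutually inverse entirely implicit, so the details you supply (the bimodule isomorphism $\mathcal{E}\otimes_{\mathcal{O}_X}\mathcal{E}^{*}\cong\mathcal{A}$ and the stalkwise check) are a legitimate fleshing-out of the same argument rather than a different one.
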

\begin{proof}
The map in one direction is given by $id_{\mathcal{E}}\otimes_{\mathcal{O}_X}(\cdot)$. The map in the other direction is given by $id_{\mathcal{E}^{*}}\otimes_{\mathcal{A}}(\cdot)$ and by using the previous lemma.
\end{proof}
Using theses two results one can prove the desired Morita equivalence in this situation. We use the following notation: if $X$ is a scheme and $\mathcal{A}$ is an Azumaya algebra over $X$, then the category of left $\mathcal{A}$-modules is denoted by $Mod(\mathcal{A})$. The category of $\mathcal{O}_X$-modules is denoted by $Mod(\mathcal{O}_X)$. 
\begin{prop}\label{morita}
Let $X$ be a scheme and let $\mathcal{A}$ be an Azumaya algebra over $X$. If $\mathcal{A}\cong \mathcal{E}nd_{\mathcal{O}_X}(\mathcal{E})$ for a locally free sheaf $\mathcal{E}$ on $X$, then the functor
\begin{alignat*}{3}
F:{} &Mod(\mathcal{O}_X) &&\longrightarrow{} &&Mod(\mathcal{A})\\
&\hspace{0.5cm}\mathcal{M} &&\longmapsto{} &&\mathcal{E}\otimes_{\mathcal{O}_X}\mathcal{M}
\end{alignat*} 
is an equivalence of categories.
\end{prop}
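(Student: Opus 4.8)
The plan is to exhibit an explicit quasi-inverse functor and to reduce every verification to the two lemmas above together with one local computation. I would set
\[
G: Mod(\mathcal{A}) \longrightarrow Mod(\mathcal{O}_X), \qquad \mathcal{N} \longmapsto \mathcal{E}^{*}\otimes_{\mathcal{A}}\mathcal{N},
\]
which makes sense because $\mathcal{E}^{*}$ is naturally a right $\mathcal{A}$-module, and claim that $G$ is inverse to $F$ up to natural isomorphism. The full faithfulness of $F$ is in fact already contained in the previous lemma: the isomorphism $\mathcal{H}om_{\mathcal{O}_X}(\mathcal{M},\mathcal{N}) \cong \mathcal{H}om_{\mathcal{A}}(\mathcal{E}\otimes_{\mathcal{O}_X}\mathcal{M},\mathcal{E}\otimes_{\mathcal{O}_X}\mathcal{N})$ is by construction the map $id_{\mathcal{E}}\otimes_{\mathcal{O}_X}(\cdot)$ induced by $F$, so $F$ is bijective on morphisms. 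It therefore remains to produce two natural isomorphisms $G\circ F \cong id$ and $F \circ G \cong id$, the second of which also gives essential surjectivity.

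For the first, given an $\mathcal{O}_X$-module $\mathcal{M}$, the $\mathcal{A}$-action on $\mathcal{E}\otimes_{\mathcal{O}_X}\mathcal{M}$ factors through $\mathcal{E}$ alone, so associativity of the tensor product gives
\[
G(F(\mathcal{M})) = \mathcal{E}^{*}\otimes_{\mathcal{A}}(\mathcal{E}\otimes_{\mathcal{O}_X}\mathcal{M}) \cong (\mathcal{E}^{*}\otimes_{\mathcal{A}}\mathcal{E})\otimes_{\mathcal{O}_X}\mathcal{M} \cong \mathcal{O}_X\otimes_{\mathcal{O}_X}\mathcal{M} \cong \mathcal{M},
\]
where the middle isomorphism is exactly the evaluation isomorphism of the first Morita lemma. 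For the second, I would use the canonical isomorphism $\mathcal{E}\otimes_{\mathcal{O}_X}\mathcal{E}^{*} \cong \mathcal{E}nd_{\mathcal{O}_X}(\mathcal{E}) = \mathcal{A}$, valid since $\mathcal{E}$ is locally free of finite rank, to obtain
\[
F(G(\mathcal{N})) = \mathcal{E}\otimes_{\mathcal{O}_X}(\mathcal{E}^{*}\otimes_{\mathcal{A}}\mathcal{N}) \cong (\mathcal{E}\otimes_{\mathcal{O}_X}\mathcal{E}^{*})\otimes_{\mathcal{A}}\mathcal{N} \cong \mathcal{A}\otimes_{\mathcal{A}}\mathcal{N} \cong \mathcal{N}
\]
for every $\mathcal{A}$-module $\mathcal{N}$.

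The routine but essential point, and the only place where real care is needed, is to check that these chains respect the correct module structures and are natural in $\mathcal{M}$ and $\mathcal{N}$. Concretely, one must verify that $\mathcal{E}\otimes_{\mathcal{O}_X}\mathcal{E}^{*} \cong \mathcal{A}$ is an isomorphism of $\mathcal{A}$-bimodules, with the left action through $\mathcal{E}$ and the right action through $\mathcal{E}^{*}$ matching left and right multiplication on $\mathcal{A}=\mathcal{E}nd_{\mathcal{O}_X}(\mathcal{E})$, so that the tensor product over $\mathcal{A}$ on the right is legitimate, and likewise that the associativity isomorphisms are $\mathcal{A}$-linear. As in the proof of the first lemma, all of this is local, so I would pass to the stalk at a point $x$, where $\mathcal{E}_x$ is free over $R=\mathcal{O}_{X,x}$ and $\mathcal{A}_x = End_R(\mathcal{E}_x)$, and check the claims on the rank-one tensors $e\otimes\phi \mapsto (v\mapsto \phi(v)e)$ using a basis and its dual basis. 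Since every isomorphism appearing in the two chains is manifestly functorial, naturality is automatic, and the resulting natural isomorphisms $G\circ F\cong id$ and $F\circ G\cong id$ show that $F$ is an equivalence of categories.
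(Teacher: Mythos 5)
Your proposal is correct and follows exactly the route the paper intends: the paper gives no written-out proof of Proposition \ref{morita}, saying only that it follows ``using these two results,'' namely the evaluation isomorphism $\mathcal{E}^{*}\otimes_{\mathcal{A}}\mathcal{E}\cong\mathcal{O}_X$ and the $\mathcal{H}om$ isomorphism, which are precisely the ingredients you use for $G\circ F\cong id$ and for full faithfulness. Your explicit quasi-inverse $G=\mathcal{E}^{*}\otimes_{\mathcal{A}}(\cdot)$, together with the canonical bimodule isomorphism $\mathcal{E}\otimes_{\mathcal{O}_X}\mathcal{E}^{*}\cong\mathcal{A}$ for essential surjectivity, is the standard completion of that sketch, so this is the same argument, merely written out in full.
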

\begin{rem}
\normalfont This equivalence maps locally free $\mathcal{O}_X$-modules to locally projective $\mathcal{A}$-modules since $\mathcal{E}$ is a locally projective $\mathcal{A}$-module. 
\end{rem}

\subsection{Hermitian Azumaya Modules}
\begin{defi}
Let $Y$ be a Dedekind scheme. A two-dimensional integral regular projective flat $Y$-scheme $X$ is called an arithmetic surface and we denote the structure morphism by $\pi: X  \rightarrow Y$.
\end{defi}
For simplicity we will only study the case where $Y$ is the spectrum of the integers in the following. So from now on we always have $Y=Spec(\mathbb{Z})$.
\medskip

\noindent Since we have $k(Y)=\mathbb{Q}$, we denote the generic fiber of $\pi$ by $X_{\mathbb{Q}}$. It is known that $X_{\mathbb{Q}}$ is normal, see (\cite[Lemma 8.3.3]{liu}). As $dim(X_{\mathbb{Q}})=1$ it follows that $X_{\mathbb{Q}}$ is also regular and so the generic fiber is in fact smooth over $\mathbb{Q}$, since $\mathbb{Q}$ is a perfect field, see (\cite[Corollary 4.3.33]{liu}). Using the embedding $\mathbb{Q}\hookrightarrow \mathbb{C}$ we can change the base and get a smooth projective curve $X_{\mathbb{C}}$ over the algebraically closed field $\mathbb{C}$. If $\mathcal{F}$ is a sheaf of $\mathcal{O}_X$-modules, then we denote the induced sheaves on $X_{\mathbb{Q}}$ respectively $X_{\mathbb{C}}$ by $\mathcal{F}_{\mathbb{Q}}$ and $\mathcal{F}_{\mathbb{C}}$.
\medskip

\noindent The associated Riemann surface $S$ of $X$ is given by $S=X_{\mathbb{C}}(\mathbb{C})$ and comes endowed with a Hermitian metric $\omega$, which is K\"{a}hler in this case. If $\mathcal{F}$ is a locally free sheaf on $X$, then we denote the induced vector bundle on $S$ by $F$.
\medskip

\noindent We can now explicitely describe our situation:\\
Given an arithmetic surface $X$ and an Azumaya algebra $\mathcal{A}$ over $X$.  We know by (\ref{pullaz}) that $\mathcal{A}_{\mathbb{C}}$ is an Azumaya algebra over $X_\mathbb{C}$. But $Br(X_{\mathbb{C}})=0$ by (\ref{brauer}) so we have $\mathcal{A}_{\mathbb{C}}\cong\mathcal{E}nd_{\mathcal{O}_{X_{\mathbb{C}}}}(\mathcal{E})$ for some locally free sheaf $\mathcal{E}$ on $X_{\mathbb{C}}$. This locally free sheaf $\mathcal{E}$ induces a vector bundle $E$ on $S$ which implies that the induced vector bundle of $\mathcal{A}$ on $S$ is given by $A=E\otimes E^{*}$.
\medskip

\noindent We pick once and for all a Hermitian metric $h$ on the vector bundle $E$. The dual metric construction gives us a Hermitian metric $h^{*}$ on the dual vector bundle $E^{*}$. Finally the tensor product metric of $h$ and $h^{*}$ defines a Hermitian metric $h^{\mathcal{A}}:=h\otimes h^{*}$ on $E\otimes E^{*}$. Thus we get a Hermitian locally free sheaf $\overline{\mathcal{A}}=(\mathcal{A},h^{\mathcal{A}})$. This construction leads to the following definition:
\begin{defi}
Let $X$ be an arithmetic surface. A Hermitian Azumaya algebra $\overline{\mathcal{A}}$ over $X$ is an Azumaya algebra $\mathcal{A}$ over $X$ with a Hermitian metric on the associated vector bundle on $S$ chosen as described above.
\end{defi} 
Now if $\overline{\mathcal{A}}$ is a Hermitian Azumaya algebra and $\mathcal{M}$ is a locally projective $\mathcal{A}$-module, then we know by (\ref{morita}) that $\mathcal{M}_{\mathbb{C}}=\mathcal{E}\otimes_{\mathcal{O}_{X_{\mathbb{C}}}} \mathcal{M}'$ for some locally free sheaf $\mathcal{M}'$ on $X_{\mathbb{C}}$ which induces a vector bundle $M'$ on $S$. This shows that the induced vector bundle of $\mathcal{M}$ is given by $M=E\otimes M'$.
\medskip
 
\noindent The vector bundle $E$ still comes with the Hermitian metric $h$ and we furthermore pick a Hermitian metric $h'$ on $M'$. The tensor product metric of $h$ and $h'$ yields a Hermitian metric $h^{\mathcal{M}}:=h\otimes h'$ on $M$. This defines a Hermitian locally free sheaf $\overline{\mathcal{M}}=(\mathcal{M},h^{\mathcal{M}})$, which also has the structure of a locally projective left $\mathcal{A}$-moudle. This suggests the following definition:
\begin{defi}\label{hermmod}
Let $X$ be an arithmetic surface and let $\overline{\mathcal{A}}$ be a Hermitian Azumaya algebra over $X$. A Hermitian Azumaya module $\overline{\mathcal{M}}$ is a couple $(\mathcal{M},h^{\mathcal{M}})$ where $\mathcal{M}$ is a locally projective $\mathcal{A}$-module and $h^{\mathcal{M}}$ is a Hermitian metric on the associated vector bundle on $S$ chosen as described above.
\end{defi}
Given two Hermitian Azumaya modules $\overline{\mathcal{M}}$ and $\overline{\mathcal{N}}$ we also often work with the sheaf $\mathcal{H}om_{\mathcal{A}}(\mathcal{M},\mathcal{N})$. Firstly we want to find a Hermitian metric for this sheaf.\\ 
As both modules are locally projective over $\mathcal{A}$ we see that in fact $\mathcal{H}om_{\mathcal{A}}(\mathcal{M},\mathcal{N})$ is locally free as an $\mathcal{O}_X$-module and hence we can look at the induced vector bundle on $S$. For this we remember that we have $\mathcal{A_{\mathbb{C}}}\cong \mathcal{E}nd_{\mathcal{O}_{X_{\mathbb{C}}}}(\mathcal{E})$ for some locally free sheaf $\mathcal{E}$ on $X_{\mathbb{C}}$ and $\mathcal{M}_{\mathbb{C}}\cong \mathcal{E}\otimes \mathcal{M}'$ as well as $\mathcal{N}_{\mathbb{C}}\cong \mathcal{E}\otimes \mathcal{N}'$ by Morita equivalence. This equivalence also gives an isomorphism 
\begin{center}
$\mathcal{H}om_{\mathcal{A_{\mathbb{C}}}}(\mathcal{M}_{\mathbb{C}},\mathcal{N}_{\mathbb{C}})\cong \mathcal{H}om_{\mathcal{O}_{X_\mathbb{C}}}(\mathcal{M'},\mathcal{N'})$.
\end{center}
But the vector bundle associated to the last sheaf is given by $(M')^{*}\otimes N'$.
\medskip

\noindent This bundle comes naturally equipped with the Hermitian metric $h^{(\mathcal{M},\mathcal{N})}:=(h')^{*}\otimes h''$, the one given by the Hermitian metrics $h'$ and $h''$ on the vector bundles $M'$ and $N'$. So we also have a Hermitian metric on the vector bundle associated to $\mathcal{H}om_{\mathcal{A}}(\mathcal{M},\mathcal{N})$. This construction defines the Hermitian locally free sheaf
\begin{equation}\label{hom}
\overline{\mathcal{H}om_{\mathcal{A}}(\mathcal{M},\mathcal{N})}:=(\mathcal{H}om_{\mathcal{A}}(\mathcal{M},\mathcal{N}),h^{(\mathcal{M},\mathcal{N})}).
\end{equation}

\section{Deligne pairing}
\subsection{Determinant of cohomology for pairs}
Assume $\mathcal{A}$ is an Azumaya algebra over the arithmetic surface $X$ and $\mathcal{M}$ and $\mathcal{N}$ are Azumaya modules on $X$. In this section we want to define a line bundle on $Y$, the $\mathcal{A}$-determinant of the cohomology for the pair $(\mathcal{M},\mathcal{N})$, using ideas of Borek (\cite[Section 2.3]{borek}). We will then go on and define a metric on this line bundle to get a Hermitian line bundle on $Y$.
\medskip

\noindent We will often use the usual determinant of the cohomology for a locally free sheaf $\mathcal{E}$ on $X$ which is given in this situation by: 
\begin{center}
$\lambda(\mathcal{E})=det(H^0(X,\mathcal{E}))\otimes_{\mathbb{Z}}det(H^1(X,\mathcal{E}))^{-1}$.
\end{center}
See for example (\cite[VI.1.4]{soul}) for this fact and more information on the determinant of the cohomology.
\begin{rem}
\normalfont Here $det$ is defined for any finitely generated $\mathbb{Z}$-module in the following way: write $M=F(M)\oplus T(M)$ with $F(M)$ a free $\mathbb{Z}$-module and $T(M)$ the torsion submodule of $M$. If $\#T(M)=a$ we define 
\begin{center}
$det(M)=det(F(M))\otimes_{\mathbb{Z}}\frac{1}{a}\mathbb{Z}$.
\end{center}
\end{rem}
\begin{defi}
The determinant of the cohomology of the pair $(\mathcal{M},\mathcal{N})$ is defined by
\begin{center}
$\lambda_{\mathcal{A}}(\mathcal{M},\mathcal{N}):=det(Hom_{\mathcal{A}}(\mathcal{M},\mathcal{N}))\otimes_{\mathbb{Z}}det(Ext^1_{\mathcal{A}}(\mathcal{M},\mathcal{N}))^{-1}$.
\end{center}
\end{defi}
\begin{lem}
If $X$ is an arithmetic surface, $\mathcal{A}$ is an Azumaya algebra over $X$ and $(\mathcal{M},\mathcal{N})$ is a pair of Azumaya modules on $X$, then there is the following relation between $\lambda_{\mathcal{A}}$ and $\lambda$:
\begin{center}
$\lambda_{\mathcal{A}}(\mathcal{M},\mathcal{N})=\lambda(\mathcal{H}om_{\mathcal{A}}(\mathcal{M},\mathcal{N}))$.
\end{center}
\end{lem}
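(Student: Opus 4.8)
The plan is to show that the two tensor factors defining $\lambda_{\mathcal{A}}(\mathcal{M},\mathcal{N})$ coincide, as finitely generated $\mathbb{Z}$-modules, with the two factors defining $\lambda(\mathcal{H}om_{\mathcal{A}}(\mathcal{M},\mathcal{N}))$, and then to substitute termwise. Concretely, it suffices to establish the two isomorphisms
$$Hom_{\mathcal{A}}(\mathcal{M},\mathcal{N})\cong H^0(X,\mathcal{H}om_{\mathcal{A}}(\mathcal{M},\mathcal{N})),\qquad Ext^1_{\mathcal{A}}(\mathcal{M},\mathcal{N})\cong H^1(X,\mathcal{H}om_{\mathcal{A}}(\mathcal{M},\mathcal{N})).$$
Since $det(-)$ and $(-)^{-1}$ depend only on the isomorphism class of the underlying $\mathbb{Z}$-module, these two identifications immediately force the asserted equality of lines over $Y$.

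The first isomorphism is the standard identity expressing global $\mathcal{A}$-homomorphisms as the global sections of the internal Hom sheaf, namely $H^0(X,\mathcal{H}om_{\mathcal{A}}(\mathcal{M},\mathcal{N}))=Hom_{\mathcal{A}}(\mathcal{M},\mathcal{N})$. For the second I would invoke the local-to-global spectral sequence for $Ext$ over the (noncommutative) sheaf of rings $\mathcal{A}$,
$$E_2^{p,q}=H^p\bigl(X,\mathcal{E}xt^q_{\mathcal{A}}(\mathcal{M},\mathcal{N})\bigr)\Longrightarrow Ext^{p+q}_{\mathcal{A}}(\mathcal{M},\mathcal{N}).$$
The decisive input is that $\mathcal{M}$ is, by definition of an Azumaya module, locally projective over $\mathcal{A}$. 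Because the sheaves $\mathcal{E}xt^q_{\mathcal{A}}(\mathcal{M},\mathcal{N})$ are computed stalkwise and a projective module has vanishing higher $Ext$, we get $\mathcal{E}xt^q_{\mathcal{A}}(\mathcal{M},\mathcal{N})=0$ for all $q>0$. Hence the spectral sequence collapses to the row $q=0$ and yields $Ext^p_{\mathcal{A}}(\mathcal{M},\mathcal{N})\cong H^p(X,\mathcal{H}om_{\mathcal{A}}(\mathcal{M},\mathcal{N}))$ for every $p$, in particular for $p=1$, which is exactly the second identification. (As a consistency check, the same collapse gives $Ext^2_{\mathcal{A}}(\mathcal{M},\mathcal{N})\cong H^2(X,\mathcal{H}om_{\mathcal{A}}(\mathcal{M},\mathcal{N}))$, which vanishes since $Y$ is affine and the fibres of $\pi$ are curves, so that both $\lambda_{\mathcal{A}}$ and $\lambda$ legitimately involve only degrees $0$ and $1$.)

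With both identifications in hand, substituting into the definition turns $det(Hom_{\mathcal{A}}(\mathcal{M},\mathcal{N}))\otimes_{\mathbb{Z}}det(Ext^1_{\mathcal{A}}(\mathcal{M},\mathcal{N}))^{-1}$ into $det(H^0(X,\mathcal{H}om_{\mathcal{A}}(\mathcal{M},\mathcal{N})))\otimes_{\mathbb{Z}}det(H^1(X,\mathcal{H}om_{\mathcal{A}}(\mathcal{M},\mathcal{N})))^{-1}$, which is precisely $\lambda(\mathcal{H}om_{\mathcal{A}}(\mathcal{M},\mathcal{N}))$. The main obstacle I anticipate is not any individual computation but rather setting up the homological algebra cleanly over the noncommutative structure sheaf $\mathcal{A}$: one must ensure that $Mod(\mathcal{A})$ has enough injectives, that the local-to-global $Ext$ spectral sequence is available in this setting, and that $\mathcal{E}xt^q_{\mathcal{A}}$ may indeed be computed on stalks. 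Granting the standard formalism of sheaves of modules over a sheaf of rings, these points are routine, and local projectivity of $\mathcal{M}$ enters at exactly one place, namely the vanishing $\mathcal{E}xt^q_{\mathcal{A}}(\mathcal{M},\mathcal{N})=0$ for $q>0$.
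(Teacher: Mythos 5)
Your proof is correct and takes essentially the same approach as the paper: identify $Hom_{\mathcal{A}}(\mathcal{M},\mathcal{N})$ with $H^0(X,\mathcal{H}om_{\mathcal{A}}(\mathcal{M},\mathcal{N}))$ and, using local projectivity of $\mathcal{M}$, identify $Ext^1_{\mathcal{A}}(\mathcal{M},\mathcal{N})$ with $H^1(X,\mathcal{H}om_{\mathcal{A}}(\mathcal{M},\mathcal{N}))$, then compare definitions. The paper merely asserts the second identification, while you supply the local-to-global $Ext$ spectral sequence collapse that is its implicit justification.
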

\begin{proof}
We have $Hom_{\mathcal{A}}(\mathcal{M},\mathcal{N})=H^0(X,\mathcal{H}om_{\mathcal{A}}(\mathcal{M},\mathcal{N}))$. Since $\mathcal{M}$ is a locally projective $\mathcal{A}$-module we furthermore get $Ext^1_{\mathcal{A}}(\mathcal{M},\mathcal{N})=H^1(X,\mathcal{H}om_{\mathcal{A}}(\mathcal{M},\mathcal{N}))$. Now the result follows by comparing the definitions.
\end{proof}
If we look at the determinant of the cohomology $\lambda(\mathcal{E})$ of a locally free sheaf $\mathcal{E}$ and this sheaf is in fact given by a Hermitian locally free sheaf $\overline{\mathcal{E}}=(\mathcal{E},h^{\mathcal{E}})$, we can endow the determinant of the cohomology $\lambda(\mathcal{E})$ with the Quillen metric $h^{\mathcal{E}}_Q$ and get the Hermitian line bundle $\overline{\lambda(\mathcal{E})}=(\lambda(\mathcal{E}),h^{\mathcal{E}}_Q)$ on $Y$, see (\cite[VI.3]{soul}) for information and further literature about the Quillen metric.
\medskip

\noindent Now if we have a Hermitian Azumaya algebra $\overline{\mathcal{A}}$ and two Hermitian Azumaya modules $\overline{\mathcal{M}}$ and $\overline{\mathcal{N}}$, then $\mathcal{H}om_{\mathcal{A}}(\mathcal{M},\mathcal{N})$ is naturally a Hermitian locally free sheaf using the metric $h^{(\mathcal{M},\mathcal{N})}$, see (\ref{hom}). Thus we can equip the $\mathcal{A}$-determinant of the cohomology with the Quillen metric $h_Q^{(\mathcal{M},\mathcal{N})}$ to get the Hermitian line bundle $\overline{\lambda_{\mathcal{A}}(\mathcal{M},\mathcal{N})}$ on $Y$.

\subsection{A Deligne pairing for Azumaya modules}
If X is an arithmetic surface and we have two Hermitian line bundles $\overline{\mathcal{L}}$ and $\overline{\mathcal{M}}$ on $X$, then in Arakelov geometry these line bundles correspond to two Arakelov divisors on X. Now one is interested in their intersection number, which is a real number in this case. Deligne defined a line bundle $\left\langle \mathcal{L},\mathcal{M}\right\rangle_D$ on $Y$ and equipped this with a Hermitian metric using the metrics from the line bundles, see (\cite[6.3.1]{deli}). The Hermitian line bundle $\overline{\left\langle \mathcal{L},\mathcal{M}\right\rangle_D}$ has the property that its Arakelov degree is equal to the intersection number of the associated divisors, see (\cite[1.2.(15)]{soul3}). In this section we want to generalize this line bundle to Hermitian Azumaya modules.
\begin{defi}
Given a Hermitian Azumaya algebra $\overline{\mathcal{A}}$ over the arithmetic surface $X$. If $(\overline{\mathcal{M}},\overline{\mathcal{N}})$ is a pair of Hermitian Azumaya modules, then we define the $\mathcal{A}$-Deligne pairing of the pair as the Hermitian line bundle on $Y$ given by:
\begin{center}
$\overline{\left\langle \mathcal{M},\mathcal{N}\right\rangle_{\mathcal{A}}}=\overline{\lambda_{\mathcal{A}}(\mathcal{M},\mathcal{N})}\otimes_{\mathbb{Z}} \overline{\lambda_{\mathcal{A}}(\mathcal{M},\mathcal{A})}^{(-1)}\otimes_{\mathbb{Z}} \overline{\lambda_{\mathcal{A}}(\mathcal{A},\mathcal{N})}^{(-1)}\otimes_{\mathbb{Z}} \overline{\lambda_{\mathcal{A}}(\mathcal{A},\mathcal{A})}$.
\end{center}
\end{defi}
A first question is, if we can find pairs of Azumaya modules, such that the pairng is trivial. The following proposition gives one easy method to find such pairs. 
\begin{prop}
Given a Hermitian Azumaya algebra $\overline{\mathcal{A}}$ over the arithmetic surface $X$. If one of the elements in the pair $(\overline{\mathcal{M}},\overline{\mathcal{N}})$ of Hermitian Azumaya modules is the Hermitian Azumaya algebra itself, then we have:
\begin{center}
$\overline{\left\langle \mathcal{M},\mathcal{N}\right\rangle_{\mathcal{A}}}=\overline{\mathcal{O}_Y}$. 
\end{center}
Here $\overline{\mathcal{O}_Y}$ is the structure sheaf of $Y$ equipped with the trivial metric.
\end{prop}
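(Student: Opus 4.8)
The plan is to exploit the purely formal structure of the definition: the $\mathcal{A}$-Deligne pairing is a four-fold tensor product of determinant-of-cohomology line bundles, and setting one argument equal to $\mathcal{A}$ makes two of the four factors coincide with the inverses of the other two. By the symmetry of the construction it suffices to treat the case $\overline{\mathcal{M}}=\overline{\mathcal{A}}$; the case $\overline{\mathcal{N}}=\overline{\mathcal{A}}$ is handled identically. Substituting $\mathcal{M}=\mathcal{A}$ into the definition, the first factor $\overline{\lambda_{\mathcal{A}}(\mathcal{A},\mathcal{N})}$ is the inverse of the third factor $\overline{\lambda_{\mathcal{A}}(\mathcal{A},\mathcal{N})}^{(-1)}$, while the second factor $\overline{\lambda_{\mathcal{A}}(\mathcal{A},\mathcal{A})}^{(-1)}$ is the inverse of the fourth factor $\overline{\lambda_{\mathcal{A}}(\mathcal{A},\mathcal{A})}$. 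So at the level of underlying line bundles the product is canonically trivial.

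First I would recall that for any Hermitian line bundle $\overline{L}=(L,h)$ on $Y$ the natural evaluation map $L\otimes_{\mathbb{Z}} L^{(-1)}\rightarrow\mathcal{O}_Y$ is an isomorphism, and that the inverse metric is defined precisely so that this isomorphism is an isometry onto $\overline{\mathcal{O}_Y}$. Applying this to $\overline{L}=\overline{\lambda_{\mathcal{A}}(\mathcal{A},\mathcal{N})}$ and to $\overline{L}=\overline{\lambda_{\mathcal{A}}(\mathcal{A},\mathcal{A})}$ collapses the two cancelling pairs to $\overline{\mathcal{O}_Y}$ isometrically, and $\overline{\mathcal{O}_Y}\otimes_{\mathbb{Z}}\overline{\mathcal{O}_Y}=\overline{\mathcal{O}_Y}$ finishes the formal part.

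The only point requiring care — and the step I expect to be the actual content, rather than the formal cancellation — is checking that the two occurrences of $\overline{\lambda_{\mathcal{A}}(\mathcal{A},\mathcal{N})}$ (and likewise of $\overline{\lambda_{\mathcal{A}}(\mathcal{A},\mathcal{A})}$) carry \emph{identical} Quillen metrics, so that the cancellation really is an isometry and not merely an isomorphism of line bundles. Here I would use that the hypothesis gives $\overline{\mathcal{M}}=\overline{\mathcal{A}}$ as Hermitian data, not just an abstract isomorphism of $\mathcal{A}$-modules. Unwinding Definition \ref{hermmod} via the Morita equivalence (\ref{morita}), the Hermitian bundle $(M',h')$ attached to the module $\mathcal{M}=\mathcal{A}$ is exactly the datum $(\mathcal{E}^{*},h^{*})$ feeding into the algebra slot; hence the metric $h^{(\mathcal{M},\mathcal{N})}=h^{(\mathcal{A},\mathcal{N})}$ built in (\ref{hom}), and therefore the induced Quillen metric on $\lambda_{\mathcal{A}}(\mathcal{A},\mathcal{N})$, agrees in both factors. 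With this observation the cancellation is isometric, and the pairing equals $\overline{\mathcal{O}_Y}$.
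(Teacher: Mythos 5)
Your proof is correct and takes essentially the same route as the paper's: substituting $\overline{\mathcal{A}}$ for one argument makes the four factors of $\overline{\left\langle \mathcal{M},\mathcal{N}\right\rangle_{\mathcal{A}}}$ cancel in two pairs by definition. Your additional check that the cancellation is isometric (the two occurrences carry literally the same Quillen metric, since the hypothesis identifies $\overline{\mathcal{M}}$ with $\overline{\mathcal{A}}$ as Hermitian data, not merely as $\mathcal{A}$-modules) is a point the paper leaves implicit in the phrase ``cancel each other by definition,'' but it does not alter the argument.
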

\begin{proof}
If one element of the pair $(\overline{\mathcal{M}},\overline{\mathcal{N}})$ is equal to $\overline{\mathcal{A}}$, then we see that the four Hermitian line bundles in $\overline{\left\langle \mathcal{M},\mathcal{N}\right\rangle_{\mathcal{A}}}$ can be grouped into two pairs which chancel each other by definition.
\end{proof}
The main observation of this note is that our definition of $\overline{\left\langle \mathcal{M},\mathcal{N}\right\rangle_{\mathcal{A}}}$ agrees up to dualization with the usual Deligne pairing for line bundles if we restrict to Azumaya line bundles, and if we pick the trivial Hermitian Azumaya algebra $\overline{\mathcal{A}}=\overline{\mathcal{O}_X}$.
\medskip

\noindent To prove the mentioned result we need the following different description of the Deligne pairing, see \cite[A.2. Proposition A.1.]{weng} or \cite[1.12. Theorem]{voron}.
\begin{lem}\label{diff}
Assume $X$ is an arithmetic surface and $\overline{\mathcal{L}}$ and $\overline{\mathcal{M}}$ are two Hermitian line bundles on $X$. Define a line bundle $\mathcal{P}$ on $Y$ by
\begin{center}
$\mathcal{P}:=\lambda(\mathcal{L}\otimes\mathcal{M})\otimes_{\mathbb{Z}}\lambda(\mathcal{L})^{-1}\otimes_{\mathbb{Z}}\lambda(\mathcal{M})^{-1}\otimes_{\mathbb{Z}}\lambda(\mathcal{O}_X)$.
\end{center}
If we equip each determinant of the cohomology with the Quillen metric we get a Hermitian line bundle $\overline{\mathcal{P}}$ on $Y$ and there is an isometry
\begin{center}
$\overline{\left\langle \mathcal{L},\mathcal{M}\right\rangle_D}\cong \overline{\mathcal{P}}$.
\end{center}
\end{lem}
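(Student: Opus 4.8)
The plan is to prove the two claims of the lemma separately: first that the underlying line bundles on $Y$ are isomorphic, and then that Deligne's metric is carried to the Quillen metric under this isomorphism. For the isomorphism I would reduce to the case of a divisor. Every line bundle on $X$ is of the form $\mathcal{O}_X(D)$ for a divisor $D$, and $D$ can be written as a difference of effective divisors which are finite flat over $Y$; so it suffices to identify $\mathcal{P}$ in the case $\mathcal{M}=\mathcal{O}_X(D)$ with $D$ effective and horizontal, and then to extend by the additivity of both sides in $\mathcal{M}$.

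For this I would use the short exact sequence
\[
0\longrightarrow \mathcal{L}\longrightarrow \mathcal{L}\otimes\mathcal{O}_X(D)\longrightarrow \iota_*\big((\mathcal{L}\otimes\mathcal{O}_X(D))|_D\big)\longrightarrow 0,
\]
where $\iota\colon D\hookrightarrow X$ is the inclusion. Since the determinant of the cohomology is multiplicative in short exact sequences we get $\lambda(\mathcal{L}\otimes\mathcal{O}_X(D))\cong\lambda(\mathcal{L})\otimes\lambda(\iota_*(\cdots))$, and because $D$ is finite over $Y$ the last factor is the norm $N_{D/Y}((\mathcal{L}\otimes\mathcal{O}_X(D))|_D):=\det (\pi|_D)_*((\mathcal{L}\otimes\mathcal{O}_X(D))|_D)$. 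Inserting this identity, together with the same identity for $\mathcal{L}=\mathcal{O}_X$, into the definition of $\mathcal{P}$, the two copies of $\lambda(\mathcal{L})$ and of $\lambda(\mathcal{O}_X)$ cancel and the factor $\mathcal{O}_X(D)|_D$ cancels as well, leaving $\mathcal{P}\cong N_{D/Y}(\mathcal{L}|_D)$. This is exactly the divisorial description of $\left\langle\mathcal{L},\mathcal{O}_X(D)\right\rangle_D$. The needed additivity in $\mathcal{M}$ then follows from the additivity of the norm in $D$, which settles the isomorphism of line bundles for arbitrary $\mathcal{M}$.

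The harder part is the isometry. Over $Y=\mathrm{Spec}(\mathbb{Z})$ a Hermitian line bundle is a rank-one lattice together with a metric on its complexification, so once the lattices are identified the claim reduces to comparing two metrics on the one-dimensional complex fibre, i.e.\ on the determinant of the cohomology of the Riemann surface $S$. Here the Quillen metric is the $L^2$-metric from Hodge theory multiplied by the holomorphic analytic torsion, whereas Deligne's metric is the classical expression combining local terms $\sum_{p}\mathrm{ord}_p(m)\,\log\|\ell(p)\|$ over the divisor of a section $m$ of $\mathcal{M}$ with a global term $\int_S\log\|\ell\|\,c_1(\overline{\mathcal{M}})$, for sections $\ell,m$ with disjoint divisors. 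I would match the $L^2$-part of the combination $\mathcal{P}$ with the local terms by computing the induced metric on the skyscraper sheaf $(\mathcal{L}\otimes\mathcal{O}_X(D))|_D$ through residues, and then show that the alternating combination of analytic torsions in $\mathcal{P}$ reproduces the global integral term.

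This torsion comparison is the main obstacle, since the analytic torsion is not additive and does not cancel naively in $\mathcal{P}$. The standard way around it is the anomaly formula of Bismut--Gillet--Soul\'{e}: one shows that the Quillen combination and Deligne's metric transform in the same way under a change of the metrics $h_{\mathcal{L}}$, $h_{\mathcal{M}}$ and of the K\"ahler form on $S$, so that the ratio of the two metrics is a universal constant independent of all choices. One then fixes this constant to be $1$ by evaluating both sides at $\mathcal{L}=\mathcal{M}=\mathcal{O}_X$, where both Hermitian line bundles are canonically trivial. This reduction to a curvature and anomaly computation, together with the final normalization, is the content carried out in the references \cite{weng} and \cite{voron}.
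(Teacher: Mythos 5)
The first thing to note is that the paper contains no proof of Lemma \ref{diff} at all: it is imported verbatim from \cite[A.2. Proposition A.1.]{weng} and \cite[1.12. Theorem]{voron}, so your proposal has to be measured against those references, not against an argument in the text. Your first half (the isomorphism of underlying line bundles) is essentially the standard argument and is sound: writing $\mathcal{M}\cong\mathcal{O}_X(D_1-D_2)$ with $D_i$ effective and finite flat over $Y$, using the sequence $0\to\mathcal{L}\to\mathcal{L}\otimes\mathcal{O}_X(D)\to(\mathcal{L}\otimes\mathcal{O}_X(D))|_D\to 0$, multiplicativity of $\lambda$ in short exact sequences, and the vanishing of $H^1$ on the affine scheme $D$, one gets $\mathcal{P}\cong N_{D/Y}(\mathcal{L}|_D)$, which is Deligne's divisorial description of the pairing. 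Two small caveats: the existence of such horizontal $D_i$ requires an avoidance argument at the finitely many primes with reducible fibres (the residue fields are finite, so this is not completely free), and the ``additivity of $\mathcal{P}$ in $\mathcal{M}$'' is not formal --- it is proved by running the same exact-sequence computation a second time --- but these are presentational points, not gaps.

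The genuine gap is in the isometry, which is the actual content of the lemma. The anomaly formula of Bismut--Gillet--Soul\'{e} controls how the Quillen side changes when you vary $h_{\mathcal{L}}$, $h_{\mathcal{M}}$ and the K\"ahler form on a \emph{fixed} pair $(\mathcal{L},\mathcal{M})$, with a fixed comparison isomorphism; it does not show that the resulting constant is independent of the pair itself, nor of the divisor $D$ used to construct your isomorphism. Pairs of line bundles of different bidegrees do not lie in a connected family, so a priori the discrepancy is a function of $(g,\deg\mathcal{L},\deg\mathcal{M})$, and evaluating at $\mathcal{L}=\mathcal{M}=\mathcal{O}_X$ fixes nothing beyond the case $D=0$, where your isomorphism is the tautological cancellation and the statement is trivial because the Quillen metrics cancel identically. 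That this is not a pedantic worry is shown by the closely parallel Deligne--Riemann--Roch isometry $\lambda(\mathcal{L})^{\otimes 2}\cong\langle\mathcal{L},\mathcal{L}\otimes\omega_{X/Y}^{-1}\rangle_D\otimes\lambda(\mathcal{O}_X)^{\otimes 2}$: the same pattern of reasoning would ``normalize'' its constant to $1$, whereas that comparison is known to hold only up to a nontrivial universal constant (the source of the $\zeta'(-1)$ term in arithmetic Riemann--Roch). To close the gap one must either (i) prove the discrepancy is multiplicative in $D$ and then do one honest analytic computation for a single horizontal section $P$, i.e.\ control the failure of the Quillen metric to be multiplicative along $0\to\mathcal{L}\to\mathcal{L}(P)\to\mathcal{L}(P)|_P\to 0$ by Green's functions/immersion formulas --- this residue computation, which you gesture at but do not carry out, is precisely the core of \cite{weng} and \cite{voron} --- or (ii) use that over $Y=Spec(\mathbb{Z})$ an isomorphism of Hermitian line bundles preserving $\widehat{deg}$ is an isometry, and compute both arithmetic degrees, e.g.\ via arithmetic Riemann--Roch, whose constants cancel in the alternating sum defining $\mathcal{P}$. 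As written, your proposal replaces this crucial step by an appeal to the very results being quoted, so at its decisive point it is a citation rather than a proof.
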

\begin{thm}
Assume $X$ is an arithmetic surface, $\overline{\mathcal{A}}$ is a Hermitian Azumaya algebra over $X$ and $(\overline{\mathcal{M}},\overline{\mathcal{N}})$ is a pair of Hermitian Azumaya line bundles on $X$. If we pick the trivial Hermitian Azumaya algebra $\overline{\mathcal{A}}=\overline{\mathcal{O}_X}$, then we have an isometry:
\begin{center}
$\overline{\left\langle \mathcal{M},\mathcal{N}\right\rangle_{\mathcal{A}}}\cong \overline{\left\langle \mathcal{M},\mathcal{N}\right\rangle_D}^{(-1)}$
\end{center}
\end{thm}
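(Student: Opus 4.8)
The plan is to specialise the four factors of $\overline{\left\langle \mathcal{M},\mathcal{N}\right\rangle_{\mathcal{A}}}$ to the trivial algebra, match the result with the description of the Deligne pairing in Lemma \ref{diff}, and then remove a resulting dual by means of the bi-multiplicativity of the Deligne pairing. First I would unwind the definitions for $\mathcal{A}=\mathcal{O}_X$. An Azumaya line bundle over the trivial algebra is just an ordinary line bundle, since $\mathcal{M}_\eta$ is then one-dimensional over $\mathcal{A}_\eta=k(X)$; so $\mathcal{M}$ and $\mathcal{N}$ are genuine Hermitian line bundles. For line bundles one has $\mathcal{H}om_{\mathcal{O}_X}(\mathcal{P},\mathcal{Q})\cong\mathcal{P}^{-1}\otimes\mathcal{Q}$, and combining this with the lemma identifying $\lambda_{\mathcal{A}}(\mathcal{P},\mathcal{Q})$ with $\lambda(\mathcal{H}om_{\mathcal{A}}(\mathcal{P},\mathcal{Q}))$ rewrites the four determinants as $\lambda(\mathcal{M}^{-1}\otimes\mathcal{N})$, $\lambda(\mathcal{M}^{-1})$, $\lambda(\mathcal{N})$ and $\lambda(\mathcal{O}_X)$. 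Hence, on the level of underlying line bundles,
\[
\left\langle \mathcal{M},\mathcal{N}\right\rangle_{\mathcal{A}}=\lambda(\mathcal{M}^{-1}\otimes\mathcal{N})\otimes_{\mathbb{Z}}\lambda(\mathcal{M}^{-1})^{-1}\otimes_{\mathbb{Z}}\lambda(\mathcal{N})^{-1}\otimes_{\mathbb{Z}}\lambda(\mathcal{O}_X).
\]

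Second I would check that the metrics agree. For the trivial algebra the auxiliary bundle $E$ is the trivial line bundle with the trivial metric, so $h=1$; thus the chosen metric $h^{\mathcal{M}}=h\otimes h'$ is simply $h'$, and the metric $h^{(\mathcal{M},\mathcal{N})}=(h')^{*}\otimes h''$ on the bundle associated to $\mathcal{H}om_{\mathcal{O}_X}(\mathcal{M},\mathcal{N})=\mathcal{M}^{-1}\otimes\mathcal{N}$ is exactly the standard dual-tensor metric. Consequently the Quillen metric on each of the four determinants above coincides with the Quillen metric on the corresponding factor in Lemma \ref{diff}.

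Third, comparing the displayed expression with Lemma \ref{diff} applied to the pair $(\mathcal{M}^{-1},\mathcal{N})$ — that is, substituting $\mathcal{L}=\mathcal{M}^{-1}$ and taking the lemma's second argument to be $\mathcal{N}$ — the four factors match term by term, metrics included, giving the isometry
\[
\overline{\left\langle \mathcal{M},\mathcal{N}\right\rangle_{\mathcal{A}}}\cong\overline{\left\langle \mathcal{M}^{-1},\mathcal{N}\right\rangle_D}.
\]
To finish I would invoke the bi-multiplicativity of the Hermitian Deligne pairing, namely the isometry $\overline{\left\langle \mathcal{L}\otimes\mathcal{L}',\mathcal{N}\right\rangle_D}\cong\overline{\left\langle \mathcal{L},\mathcal{N}\right\rangle_D}\otimes\overline{\left\langle \mathcal{L}',\mathcal{N}\right\rangle_D}$ together with the triviality of $\overline{\left\langle \mathcal{O}_X,\mathcal{N}\right\rangle_D}$; this yields $\overline{\left\langle \mathcal{M}^{-1},\mathcal{N}\right\rangle_D}\cong\overline{\left\langle \mathcal{M},\mathcal{N}\right\rangle_D}^{(-1)}$, and combining the two isometries proves the claim.

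I expect the main obstacle to be the bookkeeping of metrics rather than the underlying isomorphism of line bundles: one must verify that every Quillen metric produced by the Azumaya construction reduces, for $\overline{\mathcal{A}}=\overline{\mathcal{O}_X}$, to precisely the Quillen metric used in Lemma \ref{diff}, so that the term-by-term matching is genuinely an isometry and not merely an isomorphism. The concluding dualisation rests on the compatibility of the Deligne pairing's bi-multiplicativity with the Hermitian structure, which I would cite as a standard property rather than reprove.
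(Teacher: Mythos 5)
Your proposal is correct and follows essentially the same route as the paper's own proof: specialise the four factors to $\mathcal{A}=\mathcal{O}_X$ via $\mathcal{H}om_{\mathcal{O}_X}(\mathcal{M},\mathcal{N})\cong\mathcal{M}^{-1}\otimes\mathcal{N}$, match the result (metrics included) with Lemma~\ref{diff} applied to the pair $(\mathcal{M}^{-1},\mathcal{N})$, and then use bilinearity of the Deligne pairing together with the triviality of $\overline{\left\langle \mathcal{O}_X,\mathcal{N}\right\rangle_D}$ to convert $\overline{\left\langle \mathcal{M}^{-1},\mathcal{N}\right\rangle_D}$ into $\overline{\left\langle \mathcal{M},\mathcal{N}\right\rangle_D}^{(-1)}$. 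Your explicit verification that the Azumaya metrics degenerate to the ordinary ones when $h=1$ is slightly more careful than the paper's one-line remark, but it is the same argument.
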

\begin{proof}
If $\mathcal{A}=\mathcal{O}_X$, then we see that the Azumaya line bundles $\mathcal{M}$ and $\mathcal{N}$ are nothing more but usual line bundles on $X$. Since $\overline{\mathcal{O}_X}$ carries the trivial metric we see that the metrics constructed on the Hermitian Azumaya modules in (\ref{hermmod}) are just normal Hermitian metrics, that is, there is no tensor product metric involved.\\ 
Furthermore we see that $\mathcal{H}om_\mathcal{A}(\mathcal{M},\mathcal{N})=\mathcal{M}^{-1}\otimes_{\mathcal{O}_X}\mathcal{N}$. This implies 
\begin{center}
$\lambda_{\mathcal{A}}(\mathcal{M},\mathcal{N})=\lambda(\mathcal{M}^{-1}\otimes_{\mathcal{O}_X}\mathcal{N})$. 
\end{center}
But then we recognize that $\overline{\left\langle \mathcal{M},\mathcal{N}\right\rangle_{\mathcal{A}}}=\overline{\mathcal{P}}$ where $\overline{\mathcal{P}}$ is the Hermitian line bundle on $Y$ described in (\ref{diff}) for the Hermitian line bundles $\mathcal{M}^{-1}$ and $\mathcal{N}$ on $X$.\\ 
This shows that there is an isometry 
\begin{center}
$\overline{\left\langle \mathcal{M},\mathcal{N}\right\rangle_{\mathcal{A}}}\cong \overline{\left\langle \mathcal{M}^{-1},\mathcal{N}\right\rangle_D}$.
\end{center}
The usual Deligne pairing is bilinear, see (\cite[6.2]{deli}). So we have for example:
\begin{center}
$\left\langle \mathcal{L}\otimes_{\mathcal{O}_X} \mathcal{L}',\mathcal{M}\right\rangle_D\cong\left\langle \mathcal{L},\mathcal{M}\right\rangle_D\otimes_{\mathbb{Z}} \left\langle \mathcal{L}',\mathcal{M}\right\rangle_D$.
\end{center}
This isomorphism of line bundles becomes an isometry if we equip each line bundle with the Deligne metric, see (\cite[6.5]{deli}). Using this we immediately see that there is an isometry
\begin{center}
$\overline{\left\langle \mathcal{O}_X,\mathcal{M}\right\rangle_D}\cong \overline{\mathcal{O}_Y}$.
\end{center}
Using these facts for $\mathcal{L}'=\mathcal{L}^{-1}$ we see that there is an isometry
\begin{center}
$\overline{\left\langle \mathcal{L}^{-1},\mathcal{M}\right\rangle_D}\cong \overline{\left\langle \mathcal{L},\mathcal{M}\right\rangle_D}^{(-1)}$.
\end{center}
Putting everything together we get an isometry
\begin{center}
$\overline{\left\langle \mathcal{M},\mathcal{N}\right\rangle_{\mathcal{A}}}\cong \overline{\left\langle \mathcal{M}^{-1},\mathcal{N}\right\rangle_D}\cong \overline{\left\langle \mathcal{M},\mathcal{N}\right\rangle_D}^{(-1)}$.
\end{center}
\end{proof}
\medskip

\noindent The questions remains, if we can somehow compare this $\mathcal{A}$-Deligne pairing with the one given by Borek in (\cite[Section 2.3]{borek}). If $\overline{\mathcal{E}}$ is a Hermitian locally free sheaf, then he replaces the Hermitian metric on the associated vector bundle $E$ on the Riemann surface $S$ by an automorphism $\beta$ of the sheaf $\mathcal{E}_{\mathbb{R}}$,  (\cite[Definition 2.2.4]{borek}). The Quillen metric is replaced by the determinant of the action of $\beta$ on the $Ext$-groups, (\cite[Section 2.3.2]{borek}). Is there any relationship between these two approaches?
\medskip

\noindent Furthermore one can ask the following questions:
\begin{itemize} 
\item Is there a Riemann-Roch type formula for the $\mathcal{A}$-Deligne pairing?
\item How does the $\mathcal{A}$-determinant of the cohomology behave with respect to Serre duality? Do we have to adjust the chosen metrics to get better results?
\item Can we find arithmetic $\mathcal{A}$-Chern classes to get a similar formula for the first arithmetic Chern class of the $\mathcal{A}$-Deligne pairing, like for the usual Deligne pairing, see (\cite[Theorem 4.10.1.]{soul2}): 
\begin{center}
$\widehat{c}_1(\overline{\left\langle \mathcal{L},\mathcal{M}\right\rangle_D})=\pi_{*}(\widehat{c}_1(\overline{\mathcal{L}}).\widehat{c}_1(\overline{\mathcal{L}}))$.
\end{center}  
\end{itemize}

\addcontentsline{toc}{section}{References}
\bibliography{Artikel}

\begin{thebibliography}{Wen99}

\bibitem[Bor08]{borek}
T.~Borek.
\newblock {\em Arakelov theory of noncommutative arithmetic curves and
  surfaces}.
\newblock PhD thesis, ETH Zürich, 2008.

\bibitem[Del87]{deli}
P.~Deligne.
\newblock Le d\'{e}terminant de la cohomologie.
\newblock {\em Contemporary {M}athematics}, 67:93--177, 1987.

\bibitem[Gro68]{groth1}
A.~Grothendieck.
\newblock Le groupe de {B}rauer {III}: {E}xemples et compl\'{e}ments.
\newblock In {\em Dix {E}xposes sur la {C}ohomologie des {S}chemas}, pages
  88--188. North-Holland, 1968.

\bibitem[GS90]{soul2}
H.~Gillet and C.~Soul\'{e}.
\newblock Characteristic classes for algberaic vector bundles with {H}ermitian
  metric {I}.
\newblock {\em The Annals of Mathematics}, 131(1):163--203, 1990.

\bibitem[Liu02]{liu}
Q.~Liu.
\newblock {\em Algebraic {G}eometry and {A}rithmetic {C}urves}.
\newblock Oxford {U}niversity {P}ress, 2002.

\bibitem[Sou89]{soul3}
C.~Soul\'{e}.
\newblock G\'{e}om\'{e}trie d'{A}rakelov des surfaces arithm\'{e}tiques.
\newblock {\em S\'{e}minaire Bourbaki}, 31:327--343, 1988/1989.

\bibitem[Sou92]{soul}
C.~Soul\'{e}.
\newblock {\em Lectures on {A}rakelov geometry}.
\newblock Cambridge University Press, 1992.
\newblock Written with D. Abramovich, J.-F. Burnol \& J. Kramer.

\bibitem[Vor90]{voron}
A.A. Voronov.
\newblock A unified approach to string scattering amplitudes.
\newblock {\em Communications in Mathematical Physics}, 131(1):179--218, 1990.

\bibitem[Wen99]{weng}
L.~Weng.
\newblock {$\Omega$}-{A}dmissible theory.
\newblock {\em Proceedings of the London Mathematical Society}, 79(3):481--510,
  1999.

\end{thebibliography}
\bibliographystyle{alphaurl}

\end{document}